\documentclass[11pt]{amsart}

\usepackage[T1]{fontenc}
\usepackage{lmodern}
\usepackage{microtype}
\usepackage{amsmath,amssymb,mathtools}
\usepackage{enumitem}
\usepackage[colorlinks=true,linkcolor=blue,citecolor=blue,urlcolor=blue]{hyperref}

\setlist[enumerate]{label=(\arabic*),leftmargin=2.2em}

\newtheorem{theorem}{Theorem}[section]
\newtheorem{lemma}[theorem]{Lemma}
\newtheorem{proposition}[theorem]{Proposition}
\newtheorem{corollary}[theorem]{Corollary}
\theoremstyle{definition}

\theoremstyle{remark}
\newtheorem{remark}[theorem]{Remark}

\newcommand{\B}{\mathbb B}
\newcommand{\Z}{\mathbb Z}
\newcommand{\SL}{\operatorname{SL}}
\newcommand{\Gen}{\operatorname{Gen}}
\newcommand{\Rand}{\operatorname{Rand}}
\newcommand{\restr}{\mathbin{\upharpoonright}}
\newcommand{\EBM}{E^{\B}_{M}}
\newcommand{\eqgen}[2]{\mathrel{\equiv^{#2}_{#1}}}
\newcommand{\leBw}{\mathrel{\leq^{w}_{B}}}

\title[Equivalence of random generics]
{Equivalence of random generics is not essentially free}
\author{Jason Zesheng Chen$^\ast$}
\thanks{$^\ast$Independent Researcher, San Jose, CA, USA}
\date{August 21, 2026}

\subjclass[2020]{Primary 03E15, 03E40; Secondary 37A20}
\keywords{random forcing, countable Borel equivalence relations, essential freeness, Borel reducibility, Bernoulli shifts, cocycle superrigidity}
\hypersetup{%
  pdftitle={Equivalence of random generics is not essentially free},
  pdfauthor={Jason Zesheng Chen},
  pdfsubject={Random forcing and countable Borel equivalence relations},
  pdfkeywords={random forcing, countable Borel equivalence relations, essential freeness, Borel reducibility, Bernoulli shifts, cocycle superrigidity}
}

\begin{document}

\begin{abstract}
Let $M$ be a countable transitive model of a sufficiently large finite fragment of ZFC, and let $E^{\B}_{M}$ be equivalence of random generics over $M$.  Smythe proved that $E^{\B}_{M}$ is not Borel-reducible to the orbit relation of a free action of any countable group belonging to $M$, and asked whether the restriction on the target group can be removed.

We prove that if $A$ is a positive-measure Borel set of random reals over $M$ and $F$ is an essentially free countable Borel equivalence relation, then every Borel homomorphism from $E^{\B}_{M}\restr A$ to $F$ maps a conull subset of $A$ into a single $F$-class.  Hence no positive-measure restriction of $E^{\B}_{M}$ is essentially free, or even weakly Borel-reducible to an essentially free relation.  The proof combines Thomas's consequence of Popa cocycle superrigidity with a perfect family of $2$-marked groups and a Fubini argument using two successive random reals.
\end{abstract}

\maketitle
\makeatletter
\global\topskip\normaltopskip
\makeatother

\section{Introduction}

Throughout, $M$ is a countable transitive model of a fixed finite fragment of ZFC, taken sufficiently large for the forcing constructions below.  For a forcing notion $\mathbb P\in M$, equivalence of $M$-generic filters is the relation
\[
 G\eqgen{M}{\mathbb P}H
 \quad\Longleftrightarrow\quad
 M[G]=M[H].
\]
For $\mathbb P\in M$, let $\Gen_M^{\mathbb P}$ denote the space of $M$-generic filters, viewed as a subspace of $2^{\mathbb P}$.  Smythe showed that $\Gen_M^{\mathbb P}$ is a $G_\delta$ subspace of $2^{\mathbb P}$, and hence Polish, and that $\eqgen{M}{\mathbb P}$ is a countable Borel equivalence relation on this space~\cite[Lemmas~2.2 and~2.6]{Smythe}.  Indeed, the equivalence class of $G$ is contained in the countable model $M[G]$, while Borelness follows from the definability of the forcing relation.  Smythe then studied its complexity for several classical forcing notions.  This line of investigation has recently been continued by Calderoni and Sinapova~\cite{CalderoniSinapova}.

\newpage
\mbox{}\par
\vspace{\baselineskip}
For random forcing $\B$, it is convenient to work with random reals.  Write
\[
 \Rand(M)=\{x\in 2^\omega:x\text{ is random over }M\}
\]
and, for $x,y\in\Rand(M)$, put
\[
 x\,\EBM\,y
 \quad\Longleftrightarrow\quad
 M[x]=M[y].
\]
The set $\Rand(M)$ is a conull Borel set, and $\EBM$ is a countable Borel equivalence relation.

Smythe proved that $\EBM$ is neither amenable nor treeable.  He also proved that it is not Borel-reducible to the orbit relation of a free Borel action of any countable group $\Gamma\in M$.  His proof uses a theorem of Thomas, derived from Popa's cocycle superrigidity: after the target group $\Gamma$ is fixed, one chooses in $M$ a finitely generated group that does not embed into $\Gamma$, and realizes an associated Bernoulli relation inside $\EBM$.  The hypothesis $\Gamma\in M$ is used precisely at this point.  Smythe asked whether $\EBM$ is essentially free without any restriction on the target group~\cite[Question~5.3]{Smythe}.

We answer this question negatively, in a stronger form.  Recall that a countable Borel equivalence relation $F$ is \emph{essentially free} if it is Borel reducible to the orbit relation of a free Borel action of a countable group.  A Borel map $f\colon X\to Y$ is a \emph{Borel homomorphism} from an equivalence relation $E$ on $X$ to an equivalence relation $F$ on $Y$ if
\[
 x\mathrel{E}x'\quad\Longrightarrow\quad f(x)\mathrel{F}f(x').
\]
Following Smythe's terminology, when $E$ is carried by a standard probability space $(X,\mu)$, such a homomorphism is \emph{$\mu$-trivial} if a conull Borel subset of $X$ is mapped into a single $F$-class.  We say that $(E,\mu)$ is \emph{$F$-ergodic} if every Borel homomorphism $E\to F$ is $\mu$-trivial.  We write $E\leBw F$ if there is a countable-to-one Borel homomorphism from $E$ to $F$; such a map is called a \emph{weak Borel reduction}.

For a Borel set $A\subseteq2^\omega$ of positive measure, let $\mu_A$ denote normalized measure on $A$.

\begin{theorem}\label{thm:main}
Let $A\subseteq\Rand(M)$ be Borel with $\mu(A)>0$.  If $F$ is an essentially free countable Borel equivalence relation, then $(\EBM\restr A,\mu_A)$ is $F$-ergodic.
\end{theorem}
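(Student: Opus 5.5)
We reduce the statement to the case where $F$ is the orbit relation $E_\Gamma$ of a free Borel action of a countable group $\Gamma$ on a standard Borel space $Y$. Indeed, if $g\colon \operatorname{dom}F\to Y$ is a Borel reduction of $F$ to $E_\Gamma$, then any Borel homomorphism $f\colon\EBM\restr A\to F$ composes to a homomorphism $g\circ f$ into $E_\Gamma$. If $g\circ f$ sends a conull set into one $E_\Gamma$-class, then, since $g$ is a reduction, $f$ sends the same set into one $F$-class. So we fix such a $\Gamma$ and $f\colon A\to Y$, and must show that $f$ is $\mu_A$-trivial. The difficulty compared with Smythe's argument is that $\Gamma$ (and $f$) need not lie in $M$, so we cannot choose, inside $M$, a group not embedding into $\Gamma$.

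The tool we plan to use is Thomas's consequence of Popa cocycle superrigidity. Let $\Delta$ be a countable group with property (T) (or another superrigid group), and $\Delta\curvearrowright 2^{\Delta}$ its Bernoulli shift restricted to the free part. Then every Borel homomorphism from this orbit relation to the orbit relation of a free action of a countable group $\Gamma$ is either $\mu$-trivial, or, after restricting to a conull set, is induced by a group homomorphism $\Delta\to\Gamma$ with finite kernel. We therefore use a perfect family $\{G_r:r\in 2^\omega\}$ of $2$-marked groups. We arrange each $G_r$ to be (an extension of) a superrigid group, with the groups pairwise non-isomorphic and, more importantly, having the property that only countably many $G_r$ can virtually embed into any fixed countable $\Gamma$. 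This is possible because $\Gamma$ has only countably many $2$-generated subgroups, and a marked group is determined by its marking.

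The construction runs in two random steps. First, a random real $r$ over $M$ chooses the group $G_r$. Here $r$ is random, hence outside any fixed countable set of parameters once we pass to a conull set: the set of $r$ with $G_r$ virtually embedding into $\Gamma$ is countable, hence null. Second, a random real $z$ over $M[r]$ codes a point of the Bernoulli shift $2^{G_r}$. Since $G_r\in M[r]$, the group acts by $M[r]$-recursive maps, so $G_r$-translates $g\cdot z$ satisfy $M[r][g\cdot z]=M[r][z]$. Combining $r$ and $z$ as a single real $x=\langle r,z\rangle$, which is random over $M$ by product measure, the shift orbit of $z$ (with $r$ fixed) lies inside the $\EBM$-class of $x$, because $r$ is recoverable from $x$ and from $\langle r,g\cdot z\rangle$. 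By Fubini, for $\mu$-almost every $r$ the map $z\mapsto f(\langle r,z\rangle)$ is defined almost everywhere on a positive-measure piece of $2^{G_r}$ (on a piece of $A$ after a measure-preserving coding of $2^\omega$ with $2^\omega\times 2^{G_r}$). It is then a Borel homomorphism from the Bernoulli relation of $G_r$ to $E_\Gamma$, and it must be trivial on a conull set, since $G_r$ has no finite-kernel homomorphism into $\Gamma$. A standard ergodicity/induction argument, using the fact that a Bernoulli relation is ergodic, handles positive-measure pieces rather than the whole space.

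To conclude, for almost every $r$ the section of $f$ is a.e.\ constant on $E_\Gamma$-classes, so $f$ factors a.e.\ through $r$. It remains to see that the class does not vary with $r$. For this we also use that the relation $\EBM$ links different $r$'s: a suitable recursive bijection mixes the coordinates, for instance by swapping the roles of first and second random reals, or by applying the same argument with $x$ decomposed differently. This shows that the resulting map $r\mapsto[f]$ is invariant under an ergodic (again Bernoulli-type) relation on the $r$-coordinate, hence is constant a.e. This last gluing step is where I expect the main obstacle. The key points are to choose the coding so that the two decompositions of $x$ overlap ergodically, and to keep everything inside $A$, which may not be a product set; that requires a density-point argument restricting to basic clopen pieces.
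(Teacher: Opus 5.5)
Your architecture matches the paper's: reduction to a free $\Gamma$-action, a perfect family of $2$-marked groups coded in $M$ with only countably many members embedding into $\Gamma$, a second random real over $M[r]$ realizing the Bernoulli shift, and Thomas's theorem on vertical sections. But the gluing step you leave open is a genuine gap, and the mechanism you propose for it fails. The map $r\mapsto[f]_r$ takes values in the space of $E_\Gamma$-classes, which is not standard Borel. So invariance under an ergodic relation $R$ on the $r$-coordinate does not make it a.e.\ constant: such invariance amounts to a homomorphism $R\to E_\Gamma$, and these can be nontrivial. For instance, $E_0$ off a countable invariant set is the orbit relation of the free odometer $\Z$-action. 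Constancy would require $E_\Gamma$-ergodicity of $R$, which is essentially the original problem. Your parenthetical idea of swapping the two random reals is the right one, but it must be used directly, with no ergodicity. Put $g=f\circ I$. For product-random $(x,u)$ over $M$ one has $M[I(x,u)]=M[x,u]=M[I(u,x)]$, so $g(x,u)\mathrel{E_\Gamma}g(u,x)$. Combined with triviality on vertical sections, this gives, for almost every $(x,u,y,v)$, the chain $g(x,u)\mathrel{E_\Gamma}g(x,y)\mathrel{E_\Gamma}g(y,x)\mathrel{E_\Gamma}g(y,v)$, and Fubini yields a single class. This is the paper's Lemma~\ref{lem:symmetrization}.

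Two further points need repair.

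First, the swap requires both $I(x,u)$ and $I(u,x)$ to lie in the domain of $f$. For $A=\{w:w(0)=0,\ w(1)=1\}$ this never happens, so working on $A$ directly with density points is the wrong order. The paper first proves the conull case and then reduces to it. Since $E_0\restr\Rand(M)\subseteq\EBM$, the relation $\EBM$ is ergodic, so $[A]_{\EBM}$ is conull. A Lusin--Novikov retraction $r\colon[A]_{\EBM}\to A$ with $r(x)\mathrel{\EBM}x$ is a homomorphism, so $f\circ r$ falls under the conull case, and its triviality passes to $f$ on $A$.

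Second, Thomas's theorem yields only a homomorphism $\Delta\to\Gamma$ with finite kernel. Your count of marked $2$-generated subgroups of $\Gamma$ bounds only the $r$ for which $G_r$ \emph{embeds}. A finite-kernel map determines $G_r$ only modulo a finite normal subgroup, and nothing in your construction bounds how many $r$ share a given image. The paper uses $\Delta_{L_x}=\SL_3(\Z)\times(L_x*\Z)$, which has exactly the form Thomas's theorem requires. This group has no nontrivial finite normal subgroup (Lemma~\ref{lem:no-finite-normal}), so a finite kernel forces injectivity. Since $L_x\le\Delta_{L_x}$, the bad set is contained in the countable set $A_\Gamma$.
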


Hjorth used the phrase \emph{highly unfree} for equivalence relations whose restriction to no conull set is essentially free~\cite{Hjorth}.  Theorem~\ref{thm:main} strengthens this in two directions: it applies on every positive-measure Borel set and to arbitrary Borel homomorphisms, not only reductions.

\begin{corollary}\label{cor:positive}
If $A\subseteq\Rand(M)$ is Borel and $\mu(A)>0$, then
\[
 \EBM\restr A\not\leBw F
\]
for every essentially free countable Borel equivalence relation $F$.  In particular, no positive-measure restriction of $\EBM$ is essentially free, and hence $\EBM$ is highly unfree.
\end{corollary}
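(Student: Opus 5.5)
The plan is to reduce to a Bernoulli-shift situation where Thomas's consequence of Popa superrigidity applies, and to remove Smythe's hypothesis $\Gamma\in M$ by using a second random real. Thomas's theorem gives the following. Let $\Lambda$ be a countable group containing an infinite normal subgroup with property (T), or more generally an ICC group with relative property (T) for which Popa superrigidity holds. Let $F$ be the orbit relation of a free Borel action of a countable group $\Gamma$. Then any Borel homomorphism from the Bernoulli relation $E_\Lambda$ on $(2^\Lambda,\mu)$ to $F$ is either $\mu$-trivial or induces, after passing to a conull set and adjusting the cocycle, a group homomorphism $\Lambda\to\Gamma$ with finite kernel. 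The idea is to make the second alternative impossible. We do not do this by choosing $\Lambda\in M$ that fails to embed into $\Gamma$, since $\Gamma$ is arbitrary. Instead we use a \emph{perfect family} $\{\Lambda_z: z\in 2^\omega\}$ of $2$-marked groups. Each $\Lambda_z$ is suitable for superrigidity, and the family is chosen so that any fixed countable $\Gamma$ contains copies (modulo finite kernels) of only countably many $\Lambda_z$. Such a family exists because $\Gamma$ has only countably many $2$-generated subgroups, and the family can be taken Borel with distinct marked groups pairwise non-virtually-embeddable.

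The steps, in order, are as follows.

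First, fix a Borel homomorphism $f\colon \EBM\restr A\to F$, where $F\subseteq E_\Gamma$ for a free action of $\Gamma$. It suffices to treat this case, by composing with the reduction. Argue by contradiction, assuming $f$ is not $\mu_A$-trivial.

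Second, realize the Bernoulli relation of $\Lambda_z$ inside $\EBM$, with $z$ itself random. Take $z$ random over $M$, and then $y\in 2^{\Lambda_z}$ random over $M[z]$. The pair $(z,y)$ codes a real $x$ random over $M$, with $M[x]=M[z][y]$. The shift action of $\Lambda_z$ on $y$, which is computable from $z$, preserves the model $M[z,y]$ and genericity. So for each such $z$, the map $y\mapsto x(z,y)$ is a Borel homomorphism from the free part of $E_{\Lambda_z}$ (restricted to $M[z]$-randoms) into $\EBM$. This follows Smythe's construction, now relativized to $M[z]$.

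Third, apply a Fubini argument. The product measure on $(z,y)$ pushes forward to a measure equivalent to $\mu$. Hence for $\mu$-positively many $z$, the fiber set $A_z=\{y: x(z,y)\in A\}$ has positive measure. By ergodicity of the Bernoulli shift we can spread $A_z$ to a conull set using the $\EBM$-invariance available on saturations; concretely, we first replace $A$ by a set on which $f$ is nontrivial on every positive-measure piece. Thomas's theorem, applied to $f\circ x(z,\cdot)$, then shows that for each such $z$, either $f$ is a.e. constant modulo $F$ on the fiber, or $\Lambda_z$ virtually embeds into $\Gamma$. The latter happens for only countably many $z$, a null set, so for almost every $z$ the fiber map is trivial, sending the fiber into a single $F$-class $c(z)$.

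Fourth, show that $c(z)$ does not depend on $z$. For this, use a second choice of random parameter $z'$ together with the $\EBM$-equivalences between different fibers. For example, $x(z,y)\,\EBM\,x(z',y')$ whenever the generated models coincide. Combining two successive random reals, with $z'$ random over $M[z,y]$, the models intertwine so that the classes $c(z)$ and $c(z')$ are $F$-related. Fubini then gives a single class for a.e. $z$, and hence $f$ maps a conull subset of $A$ into one $F$-class, which is the desired contradiction.

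I expect the main obstacle to be the fourth step, together with the uniformity needed in the second. Making the family $\Lambda_z$ Borel in $z$ and computable enough that $M[z]$ sees $\Lambda_z$ is delicate. The harder part is connecting distinct fibers: the random reals in different fibers generate different models, so they are not $\EBM$-related directly. To handle this, I would first try to code $z$ into $y$ only up to a finite amount of information, so that one real $x$ lies in fibers for many parameters $z$. An alternative is an ergodicity argument for the map $z\mapsto c(z)$ under an auxiliary relation inherited from $\EBM$.
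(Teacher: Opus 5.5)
There is a genuine gap, and it sits in Step 4, which you flag yourself. Your Steps 1--3 share the paper's architecture: a perfect family of $2$-marked groups coded in $M$, a second random real realizing a Bernoulli shift over $M[z]$, Thomas's theorem applied fiberwise, and the countably many bad $z$ forming a null set. But the argument turns on Step 4, and none of your proposed mechanisms for it works.

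\emph{Your mechanisms for Step 4 fail.}
\begin{enumerate}
\item Suppose $z'$ is random over $M[z,y]$. Then $z'\notin M[z,y]=M[x(z,y)]$. Every point of the fiber over $z'$ generates a model containing $z'$, so $x(z,y)$ is $\EBM$-equivalent to nothing in that fiber. The models separate rather than intertwine.
\item The ``auxiliary relation'' route is circular. The relation on parameters inherited from $\EBM$ is $\EBM$ itself: if $M[z]=M[z']$ then $c(z)\mathrel{F}c(z')$. So you would need the very $F$-ergodicity you are proving.
\item Passing to a smaller relation such as $E_0$ cannot help. $E_0$ is induced by a free action, so the identity map is a nontrivial homomorphism from it into an essentially free relation.
\end{enumerate}

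\emph{The missing idea is a transposition.} Identify each fiber with $2^\omega$ via the bijection $e_z\in M[z]$, so that parameters and fiber coordinates live in the same space. Put $g(x,u)=f(I(x,u))$. Take a product-random pair $(x,u)$. Then $u$ is random over $M$ and $x$ is random over $M[u]$, so $I(u,x)$ is a legitimate point of the fiber over the parameter $u$. Moreover $M[I(x,u)]=M[x,u]=M[I(u,x)]$, which gives $g(x,u)\mathrel{F}g(u,x)$. Together with fiberwise triviality, this yields $g(x,u)\mathrel{F}g(x,y)\mathrel{F}g(y,x)\mathrel{F}g(y,v)$ for almost every $(x,u,y,v)$. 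Fubini then produces a single class.

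\emph{Positive-measure $A$.} The transposition needs both $I(x,u)$ and $I(u,x)$ to lie in the domain of $f$ for almost every pair. For a positive-measure $A$ this can fail completely. Take $A=\{w\in\Rand(M):w(0)=0,\ w(1)=1\}$. Then $I(x,u)\in A$ forces $x(0)=0$ and $u(0)=1$, while $I(u,x)\in A$ forces the opposite. This is why the paper first proves the conull case. It then transfers to $A$ by a Lusin--Novikov retraction $r\colon[A]_{\EBM}\to A$ with $r(w)\mathrel{\EBM}w$. The saturation is conull because $E_0\restr\Rand(M)\subseteq\EBM$ makes $\EBM$ ergodic. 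Your device of ``replacing $A$ by a set on which $f$ is nontrivial on every positive-measure piece'' does not supply this.

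\emph{Two smaller points.}
\begin{enumerate}
\item Thomas's theorem yields only a homomorphism $\Lambda_z\to\Gamma$ with finite kernel. Counting $2$-generated subgroups of $\Gamma$ controls embeddings, not such maps. The paper takes $\Lambda_z=\SL_3(\Z)\times(L_z*\Z)$ and proves this group has no nontrivial finite normal subgroup, so the kernel is trivial. Your ``pairwise non-virtually-embeddable'' family is only asserted, never constructed.
\item To get the corollary itself from triviality, you still need one more line. A countable-to-one Borel map cannot send a $\mu_A$-conull set into a single countable $F$-class, because $\mu_A$ is nonatomic.
\end{enumerate}
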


Thus Corollary~\ref{cor:positive} strengthens the negative answer to Smythe's question.

The argument removes the hypothesis $\Gamma\in M$ as follows.  Fix an external countable target group $\Gamma$.  Inside $M$ we choose a perfect family $(L_x)_{x\in2^\omega}$ of $2$-marked groups such that only countably many $L_x$ embed into $\Gamma$.  Hence for almost every random real $x$ over $M$, one has $L_x\in M[x]$ but $L_x$ does not embed into $\Gamma$.  A second real, random over $M[x]$, realizes the relevant Bernoulli shift, and interleaving the two random reals produces a single random real over $M$.

Thomas's theorem then shows, for almost every first coordinate, that the homomorphism is essentially constant along the corresponding vertical section.  Writing $I(x,u)$ for the real obtained by interleaving $x$ and $u$, one has, for product-random pairs,
\[
 M[I(x,u)]=M[x,u]=M[I(u,x)],
\]
so the values at the two transposed coordinates lie in the same target orbit.  Fubini's theorem gives global measure-triviality.  Finally, the ergodicity of $\EBM$ passes from the conull case to every positive-measure restriction.

We also record an immediate forcing consequence.  If $\dot\B$ denotes the canonical name for random forcing in a forcing extension, then for every $\mathbb P\in M$ the relation $\eqgen{M}{\mathbb P*\dot\B}$ is not weakly Borel-reducible to any essentially free countable Borel equivalence relation; see Corollary~\ref{cor:random-tail}.

\section{Preliminaries}

Let $\mu$ be the usual product measure on $2^\omega$.  A real $x\in2^\omega$ is random over a countable transitive model $N$ if it belongs to every conull Borel set whose code belongs to $N$.  We write $\Rand(N)$ for the set of such reals, and let $E_N^{\B}$ be the equivalence relation on $\Rand(N)$ defined by
\[
 x\mathrel{E_N^{\B}}y
 \quad\Longleftrightarrow\quad
 N[x]=N[y].
\]
A pair $(x,u)\in(2^\omega)^2$ is \emph{product-random over $N$} if it is random over $N$ for the product measure.  For $A\subseteq2^\omega\times2^\omega$ and $v\in2^\omega$, write
\[
 A_v=\{u\in2^\omega:(v,u)\in A\}.
\]

Let
\[
 I\colon 2^\omega\times2^\omega\longrightarrow2^\omega
\]
be the interleaving homeomorphism
\[
 I(x,u)(2n)=x(n),\qquad I(x,u)(2n+1)=u(n).
\]
It is measure preserving, and its code belongs to every model under consideration.

\begin{lemma}\label{lem:two-step}
Suppose that $x$ is random over $N$ and $u$ is random over $N[x]$.  Then $(x,u)$ is product-random over $N$.  Consequently $I(x,u)$ is random over $N$ and
\[
 N[I(x,u)]=N[x,u].
\]
\end{lemma}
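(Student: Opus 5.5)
The plan is to take the product-randomness claim as the core and then derive the two consequences from it.

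\emph{Product-randomness.} Let $C\subseteq(2^\omega)^2$ be a conull Borel set coded in $N$; we must show $(x,u)\in C$. Fubini's theorem in $N$ gives the Borel set
\[
 D=\{v\in2^\omega:\mu(C_v)=1\},
\]
which is conull and has a code in $N$. Fubini is a provable statement about Borel codes, so it holds in $N$ given a large enough fragment, and the measure computation is absolute. Since $x$ is random over $N$, we get $x\in D$. The section $C_x$ is a Borel set whose code is computed from the code of $C$ and $x$, so that code lies in $N[x]$. By absoluteness of Lebesgue measure for Borel codes (a $\Pi^1_1$/$\Sigma^1_1$-style evaluation), $\mu(C_x)=1$ holds in $N[x]$ as well as in $V$. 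Since $u$ is random over $N[x]$, we get $u\in C_x$, that is, $(x,u)\in C$.

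\emph{Randomness of $I(x,u)$.} Let $B\subseteq2^\omega$ be conull Borel coded in $N$. Then $I^{-1}[B]$ is conull, because $I$ is measure preserving, and it is coded in $N$, because the code of $I$ lies in $N$. Product-randomness gives $(x,u)\in I^{-1}[B]$, hence $I(x,u)\in B$.

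\emph{Equality of models.} The real $I(x,u)$ is computed from $x$ and $u$ by a recursive operation, so $I(x,u)\in N[x,u]$ and therefore $N[I(x,u)]\subseteq N[x,u]$. Conversely, $x$ and $u$ are recovered from $I(x,u)$ as its even and odd parts, so both lie in $N[I(x,u)]$. Hence $N[x,u]\subseteq N[I(x,u)]$, where $N[x,u]$ denotes the least transitive model of the fragment containing $N\cup\{x,u\}$. To make this rigorous, I would note that $N[x,u]=N[x][u]$ is the two-step random extension, which is a model containing $I(x,u)$. I would also note that $N[I(x,u)]$ is a generic extension of $N$, since $I(x,u)$ is random, and that it contains $x$ and $u$. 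Minimality of generic extensions then gives both inclusions.

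\emph{Main obstacle.} I expect the hardest part to be the absoluteness bookkeeping in the first step. One has to ensure that the fragment of ZFC in $N$ proves the relevant Fubini instance for Borel codes, and that "$\mu(C_x)=1$" evaluated in $N[x]$ agrees with its true value. I would handle this by citing the standard fact that the measure of a Borel set given by a code is absolute between transitive models of enough of ZFC.
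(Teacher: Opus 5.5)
Your proposal is correct and follows the paper's proof essentially step for step. The paper runs the same Fubini argument with the $N$-coded null set $\{v:\mu(A_v)>0\}$ in place of your conull set $\{v:\mu(C_v)=1\}$, then gets randomness of $I(x,u)$ and $N[I(x,u)]=N[x,u]$ from $I$ and $I^{-1}$ being coded in $N$; your absoluteness and minimality remarks just make explicit what the paper leaves implicit.
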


\begin{proof}
Let $A\subseteq2^\omega\times2^\omega$ be an $N$-coded Borel null set.  By Fubini's theorem,
\[
 B_A=\{v\in2^\omega:\mu(A_v)>0\}
\]
is an $N$-coded null Borel set.  Since $x$ is random over $N$, $x\notin B_A$, and hence $A_x$ is null.  A code for $A_x$ belongs to $N[x]$, so $u\notin A_x$.  Thus $(x,u)\notin A$.  Since $I$ is measure preserving and coded in $N$, the real $I(x,u)$ is random over $N$.  Since both $I$ and $I^{-1}$ are coded in $N$, one also has $N[I(x,u)]=N[x,u]$.
\end{proof}

Let $\Delta$ be a countably infinite group.  The Bernoulli shift $\Delta\curvearrowright2^\Delta$ is given by
\[
 (\delta\cdot z)(\eta)=z(\delta^{-1}\eta).
\]
Its orbit equivalence relation is denoted by $E(\Delta,2)$, and $\mu_\Delta$ denotes Bernoulli product measure on $2^\Delta$.  The free part of the action is invariant and conull.

We use the following consequence of Popa cocycle superrigidity, in the form isolated by Thomas and used by Smythe.

\begin{theorem}[Thomas {\cite[Theorem~3.6]{Thomas}}]\label{thm:Thomas}
Let
\[
 \Delta=\SL_3(\Z)\times S,
\]
where $S$ is any countable group.  Suppose that a countable group $\Gamma$ acts freely in a Borel way on a standard Borel space $Y$, with orbit relation $E^Y_\Gamma$.  If there is a $\mu_\Delta$-nontrivial Borel homomorphism
\[
 E(\Delta,2)\longrightarrow E^Y_\Gamma,
\]
then there is a homomorphism $\pi\colon\Delta\to\Gamma$ with finite kernel.
\end{theorem}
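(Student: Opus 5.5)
The plan is to derive the statement from Popa's cocycle superrigidity theorem for Bernoulli shifts. Popa's theorem says the following. Suppose $\Delta$ contains an infinite normal subgroup with (relative) property (T). Then every measurable cocycle $\alpha\colon\Delta\times2^\Delta\to\Gamma$ of the Bernoulli action, with values in a countable group $\Gamma$, is cohomologous to a group homomorphism.

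For $\Delta=\SL_3(\Z)\times S$, the normal subgroup $\SL_3(\Z)\times\{1\}$ is infinite and has property (T), so the hypothesis is satisfied. This holds whatever the countable group $S$ is. Since the ergodic argument below uses that $\Delta$ is infinite, I will assume, as for $\Delta$ in the definition of the Bernoulli shift, that $\Delta$ is countably infinite; this is automatic here because $\SL_3(\Z)$ is infinite.

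First, I turn the homomorphism into a cocycle. Let $f\colon 2^\Delta\to Y$ be a $\mu_\Delta$-nontrivial Borel homomorphism from $E(\Delta,2)$ to $E^Y_\Gamma$. Restrict to the conull invariant free part of $2^\Delta$. Because $\Gamma$ acts freely on $Y$, for each $\delta\in\Delta$ and $z$ there is a unique $\alpha(\delta,z)\in\Gamma$ with
\[
 f(\delta\cdot z)=\alpha(\delta,z)\cdot f(z).
\]
Uniqueness gives the cocycle identity $\alpha(\delta_1\delta_2,z)=\alpha(\delta_1,\delta_2 z)\,\alpha(\delta_2,z)$. The map $\alpha$ is Borel, since its graph is Borel with countable sections.

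Next, I apply Popa's theorem. It yields a measurable $b\colon 2^\Delta\to\Gamma$ and a homomorphism $\pi\colon\Delta\to\Gamma$ such that
\[
 b(\delta z)\,\alpha(\delta,z)\,b(z)^{-1}=\pi(\delta)
\]
for all $\delta$ and almost all $z$. Since $\Delta$ is countable, I can pass to a conull $\Delta$-invariant Borel set on which this holds everywhere and $b$ is Borel. Put $f'(z)=b(z)\cdot f(z)$. Then $f'(\delta z)=\pi(\delta)\cdot f'(z)$, so $f'$ is equivariant along $\pi$. Moreover, $f'(z)$ lies in the same $\Gamma$-orbit as $f(z)$.

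Finally, I argue that the kernel of $\pi$ is finite. Suppose instead that $K=\ker\pi$ is infinite. The Bernoulli shift is mixing, so its restriction to any infinite subgroup is still mixing, hence ergodic; in particular $K$ acts ergodically on $(2^\Delta,\mu_\Delta)$. The map $f'$ is $K$-invariant and takes values in the standard Borel space $Y$. An invariant Borel map from an ergodic action into a standard Borel space is constant almost everywhere, so $f'$ is a.e. equal to some $y_0$. Then $f(z)\in\Gamma\cdot y_0$ for almost every $z$, which says $f$ is $\mu_\Delta$-trivial, contradicting the hypothesis. Therefore $\ker\pi$ is finite.

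I expect the main obstacle to be checking the hypotheses of Popa's theorem as stated in the literature, namely $w$-rigidity via an infinite normal subgroup with relative property (T) for an arbitrary countable factor $S$. The first thing I would verify is that relative property (T) of $\SL_3(\Z)\le\Delta$ follows directly from property (T) of $\SL_3(\Z)$. A smaller point to handle is the passage from ``almost everywhere'' cohomology identities to an invariant conull Borel set.
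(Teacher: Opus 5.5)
Your proposal is correct and matches the intended route: the paper does not reprove this result but cites it from Thomas as a consequence of Popa's cocycle superrigidity. Your argument is the standard derivation: build the cocycle from freeness of the $\Gamma$-action, untwist it via Popa's theorem for the w-rigid group $\SL_3(\Z)\times S$, and use ergodicity of an infinite kernel on the Bernoulli shift to force $\mu_\Delta$-triviality.
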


We shall use the following elementary consequence of Fubini's theorem.

\begin{lemma}\label{lem:symmetrization}
Let $(X,\nu)$ be a standard probability space, let $F$ be a Borel equivalence relation on $Y$, and let $g\colon X\times X\to Y$ be Borel.  Assume that
\begin{enumerate}
\item for $\nu$-almost every $x$,
\[
 g(x,u)\mathrel{F}g(x,v)
\]
for $\nu^2$-almost every $(u,v)$;
\item
\[
 g(x,u)\mathrel{F}g(u,x)
\]
for $\nu^2$-almost every $(x,u)$.
\end{enumerate}
Then there is a single $F$-class containing $g(x,u)$ for $\nu^2$-almost every $(x,u)$.
\end{lemma}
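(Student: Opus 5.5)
We prove Lemma~\ref{lem:symmetrization} with a Fubini argument, moving constancy along vertical sections to constancy along horizontal sections via the symmetry hypothesis (2). We then combine the two to show that almost all values are $F$-equivalent to one another. First, observe that the set
\[
 R=\{(p,q)\in X^2\times X^2 : g(p)\mathrel{F}g(q)\}
\]
is Borel, since $F$ is Borel and $g$ is Borel. Hence all sections and the measures of sections used below are Borel/measurable, and Fubini applies.

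\textbf{Step 1 (vertical classes).} By (1), there is a conull Borel set $X_1\subseteq X$ with the following property. For $x\in X_1$, by Fubini there is a $u_0$ such that $g(x,u)\mathrel{F}g(x,u_0)$ for $\nu$-a.e.\ $u$. Let $C_x$ denote the $F$-class of $g(x,u_0)$. Then $g(x,u)\in C_x$ for $\nu$-a.e.\ $u$, and $C_x$ does not depend on the choice of $u_0$.

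\textbf{Step 2 (horizontal classes).} Let $D\subseteq X^2$ be the conull set of $(x,u)$ from (2). Set
\[
 G=\{(x,u): x\in X_1,\ u\in X_1,\ g(x,u)\in C_x,\ g(u,x)\in C_u,\ (x,u)\in D\}.
\]
We check that $G$ is conull:
\begin{itemize}
 \item the condition $g(x,u)\in C_x$ holds for a.e.\ $u$, for each $x\in X_1$;
 \item the condition $g(u,x)\in C_u$ holds for a.e.\ $(x,u)$, by Fubini applied with the roles of the coordinates swapped, since $(x,u)\mapsto(u,x)$ preserves $\nu^2$.
\end{itemize}
Each of these conditions is Borel, expressed through $R$ as ``$\nu$-a.e.\ $v$ with $g(x,u)\mathrel{F}g(x,v)$'', so this is legitimate.

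On $G$ we have the chain
\[
 C_x\ni g(x,u)\mathrel{F}g(u,x)\in C_u,
\]
and therefore $C_x=C_u$.

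\textbf{Step 3 (conclusion).} Since $G$ is conull, Fubini yields some $x_0\in X_1$ with $(x_0,u)\in G$ for a.e.\ $u$. For such $u$ we get $C_u=C_{x_0}$. So $C_u=C:=C_{x_0}$ for $\nu$-a.e.\ $u$. Then, for a.e.\ $x$, we have $C_x=C$ and $g(x,u)\in C_x=C$ for a.e.\ $u$. One more application of Fubini gives $g(x,u)\in C$ for $\nu^2$-a.e.\ $(x,u)$.

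The only delicate point is measurability: the set $\{x: C_x=C\}$ and the conditions in $G$ must be Borel. This is handled throughout by expressing every condition via the Borel set $R$ and integrating, rather than by working with the classes $C_x$ as objects.
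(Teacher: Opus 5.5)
Your proof is correct and is essentially the paper's argument. The paper compresses your Steps 1--3 into the single $\nu^4$-a.e.\ chain $g(x,u)\mathrel{F}g(x,y)\mathrel{F}g(y,x)\mathrel{F}g(y,v)$ and then picks a base point $p_0\in X\times X$ by Fubini; you route the same links (hypothesis (1) within a vertical fiber, hypothesis (2) to glue the fiber over $x$ to the fiber over $y$) through the named vertical classes $C_x$, and you check the measurability of each condition more explicitly.
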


\begin{proof}
For $\nu^4$-almost every $(x,u,y,v)$, the hypotheses give
\[
 g(x,u)\mathrel{F}g(x,y)
 \mathrel{F}g(y,x)
 \mathrel{F}g(y,v).
\]
It follows that for $\nu^2\times\nu^2$-almost every pair $(p,q)\in(X\times X)^2$, one has $g(p)\mathrel{F}g(q)$.  By Fubini, some $p_0\in X\times X$ satisfies $g(p)\mathrel{F}g(p_0)$ for $\nu^2$-almost every $p$.
\end{proof}

\section{A perfect family of marked groups}

Let $F_2=\langle a,b\rangle$ be the free group on two generators.  The space
\[
 \mathcal N_2=\{N\leq F_2:N\text{ is normal}\}
\]
is a closed subspace of $2^{F_2}$, and hence a compact Polish space.  Recall that a $2$-marked group is a group equipped with an ordered generating pair.  Thus an element $N\in\mathcal N_2$ codes the $2$-marked group $(F_2/N;(aN,bN))$.

The group $F_2$ has continuum many normal subgroups~\cite[III.C.40]{deLaHarpe}, so $M$ regards $\mathcal N_2$ as an uncountable compact Polish space.  We choose the perfect family by a Cantor scheme so that its injectivity is absolute to the ambient universe.

\begin{lemma}\label{lem:absolute-perfect-family}
There is a continuous map
\[
 \vartheta\colon2^\omega\longrightarrow\mathcal N_2,
 \qquad x\longmapsto N_x,
\]
whose code belongs to $M$ and whose ambient interpretation is injective.  If $x$ is random over $M$, then $N_x\in M[x]$.
\end{lemma}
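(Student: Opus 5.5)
We will build $\vartheta$ inside $M$ as a Cantor scheme of clopen subsets of $\mathcal N_2$, so that it is continuous and injective in $M$, and so that its injectivity is witnessed by an arithmetical (hence absolute) property of its code. Since $M$ is a model of enough ZFC and believes $\mathcal N_2$ is an uncountable compact Polish space, the Cantor--Bendixson theorem holds in $M$. So $M$ sees a nonempty perfect kernel $P\subseteq\mathcal N_2$. Here $\mathcal N_2$ is closed in $2^{F_2}$, and $F_2$ is enumerated in $M$, so basic clopen sets are determined by finite partial characteristic functions.

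Working in $M$, we assign to each $s\in2^{<\omega}$ a finite partial function $p_s$ from $F_2$ to $2$ with the following properties:
\begin{enumerate}
\item the basic clopen set $[p_s]$ meets $P$;
\item $p_{s^\frown i}$ extends $p_s$;
\item the domain of $p_s$ contains the first $|s|$ elements of $F_2$;
\item $p_{s^\frown0}$ and $p_{s^\frown1}$ disagree at some element of $F_2$.
\end{enumerate}
The construction is possible because $P$ is perfect: $[p_s]\cap P$ contains two distinct points, which differ at some coordinate, and we extend $p_s$ in two incompatible ways keeping both points, then extend further to cover the next element of $F_2$. The map $s\mapsto p_s$ is a finite-combinatorial object, i.e.\ a real in $M$.

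We then define $\vartheta(x)=\bigcup_n p_{x\restr n}$. This is a total function $F_2\to2$ by condition (3), and the map is continuous, with its code being the scheme itself. The key point is that $\vartheta(x)\in\mathcal N_2$ for every $x$ in the ambient universe. Each $\vartheta(x)$ is a limit of points of the closed set $\mathcal N_2$, since $[p_{x\restr n}]$ meets $P\subseteq\mathcal N_2$. Alternatively, the statement ``for all $x$, $\vartheta(x)$ is a normal subgroup'' is $\Pi^1_1$ in the code, so it transfers from $M$ to $V$ by Mostowski absoluteness.

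Injectivity in the ambient universe is immediate from condition (4). If $x\neq y$, let $s$ be their longest common initial segment; then $\vartheta(x)\supseteq p_{s^\frown i}$ and $\vartheta(y)\supseteq p_{s^\frown(1-i)}$ differ at a specified element. This argument uses only the scheme, not any property of $M$.

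Finally, if $x$ is random over $M$, then $M[x]$ contains both the code (which lies in $M$) and $x$. Hence $M[x]$ can compute $N_x=\bigcup_n p_{x\restr n}$ by a recursion on the scheme, so $N_x\in M[x]$.

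The only real obstacle is the existence of a perfect subset inside $M$. This needs Cantor--Bendixson, or at least the existence of a perfect subset of an uncountable closed set, to hold in the fragment. That is a standard theorem of ZFC and is covered by taking the fragment sufficiently large. Continuum many normal subgroups guarantees uncountability in $M$, since that result is also provable in the fragment.
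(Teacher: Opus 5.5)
Your proposal is correct and is essentially the paper's argument: a Cantor scheme of pairwise incompatible finite conditions is built in $M$ from a perfect subset of $\mathcal N_2$. Ambient injectivity is then read off from the incompatibility, and $N_x\in M[x]$ holds because the scheme lies in $M$. The only cosmetic difference is that the paper takes a perfect subtree of the tree $S$ with $[S]=\mathcal N_2$, so every ambient branch is automatically a normal subgroup, whereas you use the Cantor--Bendixson kernel and a closedness (or Mostowski absoluteness) argument for the same point.
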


\begin{proof}
Fix in $M$ an enumeration of $F_2$ and identify $2^{F_2}$ with $2^\omega$.  Let $S\subseteq2^{<\omega}$ be the closed tree whose branches are the characteristic functions of normal subgroups of $F_2$.  Concretely, a finite binary string belongs to $S$ when it violates none of the subgroup or normality requirements whose relevant coordinates have already been decided.  Thus the definition of $S$ is absolute and
\[
 [S]=\mathcal N_2
\]
in every ambient universe containing the code.

Since $M$ regards $[S]$ as uncountable, the perfect set theorem in $M$ gives a perfect subtree $T\subseteq S$.  Perfectness of a tree is a statement about finite nodes and finite extensions, so the same $T$ is perfect in the ambient universe.  Working in $M$, choose nodes $(t_s)_{s\in2^{<\omega}}$ of $T$ so that
\begin{enumerate}
\item $t_s$ is a proper initial segment of both $t_{s^\frown0}$ and $t_{s^\frown1}$;
\item $t_{s^\frown0}$ and $t_{s^\frown1}$ are incompatible;
\item $|t_s|\geq |s|$.
\end{enumerate}
For an arbitrary ambient real $x$, let $N_x$ be the subset of $F_2$ whose characteristic function is
\[
 \bigcup_n t_{x\restr n}.
\]
This union is a branch through $T$, hence through $S$, so $N_x\in\mathcal N_2$.  Incompatibility of the two immediate successors makes $x\mapsto N_x$ injective, and the length condition makes it continuous.  The sequence $(t_s)$ belongs to $M$.  Hence, when $x$ is random over $M$, the displayed union belongs to the generic extension $M[x]$.
\end{proof}

Put
\[
 L_x=F_2/N_x.
\]
If $x$ is random over $M$, then $L_x\in M[x]$.

\begin{lemma}\label{lem:target-avoidance}
For every countable group $\Gamma$ in the ambient universe, the set
\[
 A_\Gamma=\{x\in2^\omega:L_x\text{ embeds into }\Gamma\}
\]
is countable.
\end{lemma}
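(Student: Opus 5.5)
Since $\Gamma$ is countable and $L_x$ is generated by two elements, we count the possible embeddings. Suppose $\varphi\colon L_x\hookrightarrow\Gamma$ is an embedding. Then $\varphi$ is determined by the pair $(g,h)=(\varphi(aN_x),\varphi(bN_x))\in\Gamma^2$. Conversely, the pair $(g,h)$ determines $N_x$: let $\rho_{g,h}\colon F_2\to\Gamma$ be the homomorphism with $a\mapsto g$ and $b\mapsto h$. Then $\varphi\circ(w\mapsto wN_x)=\rho_{g,h}$, and injectivity of $\varphi$ gives
\[
 N_x=\ker\rho_{g,h}.
\]
So every $x\in A_\Gamma$ satisfies $N_x\in\{\ker\rho_{g,h}:(g,h)\in\Gamma^2\}$. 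This set is countable because $\Gamma^2$ is countable.

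Next we use injectivity. By Lemma~\ref{lem:absolute-perfect-family}, the ambient interpretation of $x\mapsto N_x$ is injective. Hence the preimage of a countable set of normal subgroups is countable, and $A_\Gamma$ is countable. We may also write explicitly
\[
 A_\Gamma=\bigcup_{(g,h)\in\Gamma^2}\vartheta^{-1}\bigl(\{\ker\rho_{g,h}\}\bigr),
\]
where each set in the union has at most one element.

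Everything takes place in the ambient universe, where $\Gamma$ lives. The only point needing care is that $\vartheta$ is injective there and not merely in $M$. This is exactly what Lemma~\ref{lem:absolute-perfect-family} supplies, through the incompatibility of the Cantor-scheme nodes. With that in hand, there is no real obstacle.
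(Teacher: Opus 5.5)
Your proposal is correct and follows essentially the same argument as the paper. The paper also notes that an embedding's images of the two generators give a homomorphism $F_2\to\Gamma$ whose kernel is $N_x$, so each pair in $\Gamma^2$ witnesses at most one $x$ by the ambient injectivity of $x\mapsto N_x$, and countability of $\Gamma^2$ finishes the proof.
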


\begin{proof}
Suppose that $j\colon L_x\hookrightarrow\Gamma$ is an embedding.  The ordered pair
\[
 \bigl(j(aN_x),j(bN_x)\bigr)\in\Gamma^2
\]
determines a homomorphism $\varphi\colon F_2\to\Gamma$.  Since $j$ is injective and $L_x=F_2/N_x$, one has $\ker(\varphi)=N_x$.  Thus a fixed ordered pair in $\Gamma^2$ can witness the embeddability of at most one $L_x$, because $x\mapsto N_x$ is injective.  Since $\Gamma^2$ is countable, so is $A_\Gamma$.
\end{proof}

For a countable group $L$, define
\[
 S_L=L*\Z,
 \qquad
 \Delta_L=\SL_3(\Z)\times S_L.
\]

\begin{lemma}\label{lem:no-finite-normal}
For every countable group $L$, the group $\Delta_L$ has no nontrivial finite normal subgroup.
\end{lemma}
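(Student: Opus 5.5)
Let $K\trianglelefteq\Delta_L=\SL_3(\Z)\times S_L$ be a finite normal subgroup; we show $K$ is trivial. We treat the two factors separately and then combine them.

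\emph{The factor $S_L=L*\Z$.} Assume first that $L$ is nontrivial. Then $S_L$ is a free product of two nontrivial groups, and is not the infinite dihedral group $\Z/2*\Z/2$, since the factor $\Z$ is infinite. Any nontrivial element of a free product is conjugate either into a factor or to a cyclically reduced element of length at least two. We check that no nontrivial element $g$ has finitely many conjugates.
\begin{itemize}
\item If $g$ is conjugate into $L$, say $g=\ell\in L$, conjugating by powers $t^n$ of the generator $t$ of $\Z$ gives the pairwise distinct reduced words $t^{-n}\ell t^{n}$.
\item If $g$ is conjugate into $\Z$, conjugate by a nontrivial $\ell\in L$ and then by powers of $t$ in the same way.
\item If $g$ is cyclically reduced of length at least two, then $t^{-n}gt^{n}$ (or a conjugate by a suitable element of $L$) has unbounded reduced length.
\end{itemize}
So $S_L$ has trivial FC-center, and in particular no nontrivial finite normal subgroup. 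If $L$ is trivial, then $S_L=\Z$, which is torsion-free.

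\emph{The factor $\SL_3(\Z)$.} A finite normal subgroup of $\SL_3(\Z)$ lies in the FC-center. By Borel density, or simply because the center of $\SL_3(\Z)$ is trivial (its only scalar matrix is the identity, as $\det(-I)=-1$), this subgroup is trivial. More elementarily: a finite normal subgroup is centralized by a finite-index subgroup. A finite-index subgroup of $\SL_3(\Z)$ contains a power of each elementary matrix. A matrix commuting with all $E_{ij}^m$ is scalar, and the only scalar matrix in $\SL_3(\Z)$ is $I$.

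\emph{Combining the factors.} Let $p_1,p_2$ be the two projections. Each $p_i(K)$ is a finite normal subgroup of the corresponding factor, because the projections are surjective. By the two arguments above, both $p_1(K)$ and $p_2(K)$ are trivial, so $K$ is trivial.

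The step that needs the most care is the free-product conjugacy argument. The cleanest version is the classical fact that a free product $A*B$ with $A,B$ nontrivial and not both of order $2$ is infinite with trivial FC-center; this can be cited from the normal form theorem, or one can note that $L*\Z$ acts on its Bass--Serre tree with trivial edge stabilizers.
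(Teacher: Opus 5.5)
Your proof is correct. For $\SL_3(\Z)$, your ``more elementary'' argument and your projection step are exactly what the paper does: a finite normal subgroup has a finite-index centralizer, which contains $I+mE_{ij}$ for some $m\neq0$, so every element of the subgroup is scalar and hence equals $I$. Keep that version. Triviality of the center alone would not rule out a nontrivial finite normal subgroup. Also write the powers as $(I+E_{ij})^m=I+mE_{ij}$, since the matrix unit satisfies $E_{ij}^m=0$ for $m\geq2$.

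The genuine difference is in the factor $L*\Z$. The paper uses the fixed-point theorem for finite groups acting on trees to place a finite normal $K$ inside $L$ (normality makes the conjugation harmless). A single normal-form observation, $tkt^{-1}\notin L$, then kills $K$. This takes two lines and covers $L=1$ with no case split. You instead prove the stronger fact that $L*\Z$ has trivial FC-center when $L\neq1$, using only the normal form theorem. Combined with your $\SL_3(\Z)$ argument, this even shows $\Delta_L$ is ICC. The approach is more self-contained but costs a case analysis.

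In your third case the stated reason is wrong. If $g$ is cyclically reduced of length $2k\geq2$, exactly one end syllable of $g$ lies in $\Z$ and absorbs the $t^{\pm n}$. So $t^{-n}gt^{n}$ has length at most $2k+1$, which is bounded. What is true is that for all but finitely many $n$ these are reduced words such as $t^{-n}\ell_1t^{a_1}\cdots\ell_kt^{a_k+n}$, differing in an end syllable. They are therefore pairwise distinct, which is all you need.

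Alternatively, that case can be dropped for the lemma itself. Elements of a finite normal subgroup are torsion, and torsion elements of a free product are conjugate into a factor. That fact, however, is essentially the one the paper cites.
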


\begin{proof}
First, $\SL_3(\Z)$ has no nontrivial finite normal subgroup.  Indeed, let $K\trianglelefteq\SL_3(\Z)$ be finite and let $g\in K$.  The conjugacy class of $g$ is finite, so its centralizer has finite index.  For each elementary one-parameter subgroup
\[
 U_{ij}=\{I+nE_{ij}:n\in\Z\},\qquad i\neq j,
\]
the intersection of $U_{ij}$ with the centralizer of $g$ has finite index in $U_{ij}$.  Hence $g$ commutes with $I+mE_{ij}$ for some nonzero $m$, and therefore with $E_{ij}$.  This holds for every $i\neq j$.  A matrix commuting with all elementary matrices is scalar, while the center of $\SL_3(\Z)$ is trivial.  Thus $g=I$.

Next let $K$ be a finite normal subgroup of $L*\Z$.  By the fixed-point theorem for finite groups acting on trees, applied to the Bass--Serre tree of the free product, every finite subgroup of $L*\Z$ is conjugate into one of the two factors~\cite[I.6.5]{Serre}.  It cannot be conjugate into the $\Z$-factor unless it is trivial.  After conjugating, suppose that $K\leq L$.  If $1\neq k\in K$ and $t$ is a generator of the $\Z$-factor, normality gives $tkt^{-1}\in K\leq L$, contradicting the normal-form theorem for free products.  Hence $K$ is trivial.

Finally, the coordinate projections of a finite normal subgroup of
$\SL_3(\Z)\times(L*\Z)$ are finite normal subgroups of the two factors.  Both projections are trivial, and hence so is the original subgroup.
\end{proof}

\section{Bernoulli shifts in a second random extension}

For a Borel set $C\subseteq2^\omega$ and $x\in2^\omega$, write
\[
 C_x=\{u\in2^\omega:I(x,u)\in C\}.
\]
After fixing the first random real $x$, the next proposition realizes the Bernoulli shift of a group that is countable in $N[x]$ by means of a second random real.  We include a prescribed conull section for use in Theorem~\ref{thm:main}.

\begin{proposition}\label{prop:second-coordinate}
Let $N$ be a countable transitive model of a sufficiently large finite fragment of ZFC, let $x$ be random over $N$, and let $\Delta\in N[x]$ be a group which $N[x]$ regards as countably infinite.  Suppose that $C\subseteq2^\omega$ is Borel and $\mu(C_x)=1$.  Then there are an invariant conull Borel set $R\subseteq2^\Delta$ and an injective Borel map
\[
 c\colon R\longrightarrow C\cap\Rand(N)
\]
such that
\begin{enumerate}
\item $R$ is contained in the free part of the Bernoulli shift $\Delta\curvearrowright2^\Delta$;
\item $N[c(z)]=N[x,z]$ for every $z\in R$;
\item $c(z)\,E_N^{\B}\,c(\delta\cdot z)$ for every $z\in R$ and $\delta\in\Delta$.
\end{enumerate}
Consequently, there is a Borel homomorphism from $E(\Delta,2)\restr R$ to $E_N^{\B}\restr(C\cap\Rand(N))$.
\end{proposition}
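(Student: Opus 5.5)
The plan is to transport the Bernoulli shift $\Delta\curvearrowright 2^\Delta$ to $2^\omega$ by a bijection that lives in $N[x]$, and then to interleave with $x$.

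Working in $N[x]$, fix an enumeration $e\colon\omega\to\Delta$ of $\Delta$. It induces a measure-preserving homeomorphism $h\colon 2^\Delta\to 2^\omega$, namely $h(z)(n)=z(e(n))$, which carries $\mu_\Delta$ to $\mu$. For each $\delta\in\Delta$, the shift by $\delta$ becomes a homeomorphism $h\circ\delta\circ h^{-1}$ of $2^\omega$. This homeomorphism is coded in $N[x]$: it is just a permutation of coordinates given by the multiplication table of $\Delta$, which lies in $N[x]$.

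Next let $R_0$ be the set of $z\in 2^\Delta$ such that $h(z)$ is random over $N[x]$ and $z$ lies in the free part. Randomness over $N[x]$ means avoiding the countably many $N[x]$-coded null sets. The preimage under $h$ of such a set is null, and so is its image under each shift. Hence the set of $z$ with $h(z)\in\Rand(N[x])$ is conull. It is also invariant: $h(\delta z)$ is the image of $h(z)$ under an $N[x]$-coded measure-preserving homeomorphism, and such maps preserve randomness over $N[x]$. The free part is invariant and conull as well. Now intersect with $h^{-1}(C_x)$. Since $\mu(C_x)=1$, every real random over $N[x]$ lies in $C_x$ provided $C_x$ has a code in $N[x]$. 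That is not automatic, because $C$ is an arbitrary ambient Borel set. To handle this, take
\[
 R=\bigcap_{\delta\in\Delta}\delta\cdot\bigl(R_0\cap h^{-1}(C_x)\bigr).
\]
This is a countable intersection of conull sets, so $R$ is conull, invariant and Borel.

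Define $c(z)=I(x,h(z))$. The map $c$ is Borel and injective because $h$ and $I$ are. For $z\in R$, $h(z)\in C_x$, so $c(z)\in C$. By Lemma~\ref{lem:two-step}, $c(z)$ is random over $N$ and
\[
 N[c(z)]=N[x,h(z)].
\]
Moreover $N[x,h(z)]=N[x][h(z)]=N[x][z]$, since $h$ and $h^{-1}$ are coded in $N[x]$. This gives (2). For (3), note that $h(\delta z)$ and $h(z)$ are interdefinable over $N[x]$. Hence $N[x,h(\delta z)]=N[x,h(z)]$, so $N[c(\delta z)]=N[c(z)]$. The final consequence follows because $c$ is a homomorphism on $R$. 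Freeness is used only for item (1).

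The main obstacle is the bookkeeping about where codes live. The shifts and $h$ are coded in $N[x]$ but not in $N$, so randomness over $N[x]$ has to be used before applying Lemma~\ref{lem:two-step}. We also need to check that $N[x][z]$ is meaningful, i.e.\ that a real random over $N[x]$ yields a generic extension of the countable transitive model $N[x]$. This requires $N[x]$ to satisfy enough of ZFC, which is why $N$ is taken to model a sufficiently large fragment. With that granted, every step above is routine.
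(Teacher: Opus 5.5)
Your proposal is correct and takes essentially the same route as the paper: transport the shift to $2^\omega$ via an enumeration $e\in N[x]$, intersect the points random over $N[x]$ with the invariant hull $\bigcap_{\delta}\delta\cdot h^{-1}(C_x)$ (needed because $C$ need not be coded in $N[x]$), set $c(z)=I(x,h(z))$, and apply Lemma~\ref{lem:two-step}. The only cosmetic difference is that you intersect with the free part explicitly, whereas the paper observes that the free part is an $N[x]$-coded conull set and so already contains every point random over $N[x]$.
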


\begin{proof}
Choose in $N[x]$ a bijection $e\colon\omega\to\Delta$ and let
\[
 e^*\colon2^\Delta\longrightarrow2^\omega,
 \qquad e^*(z)(n)=z(e(n)),
\]
be the induced measure-preserving homeomorphism.  Put
\[
 D=\{z\in2^\Delta:I(x,e^*(z))\in C\}.
\]
Since $\mu(C_x)=1$, the set $D$ is conull.  Put
\[
 D_0=\bigcap_{\delta\in\Delta}\{z\in2^\Delta:\delta\cdot z\in D\}.
\]
Since $\Delta$ is countable, $D_0$ is Borel, invariant, and conull.

Let $R$ be the intersection of $D_0$ with the set of points of $2^\Delta$ which are random over $N[x]$ for Bernoulli product measure.  Externally $N[x]$ is countable, so $R$ is Borel and conull.  It is invariant because every shift and its inverse are measure-preserving Borel maps coded in $N[x]$.  The free part of the Bernoulli action is an $N[x]$-coded conull Borel set, and therefore every point random over $N[x]$ belongs to it.

For $z\in R$, define
\[
 c(z)=I(x,e^*(z)).
\]
The map $c$ is Borel and injective, and $c(z)\in C$ because $z\in D$.  Since $e^*$ is a measure-preserving Borel isomorphism coded in $N[x]$, the real $e^*(z)$ is random over $N[x]$.  By Lemma~\ref{lem:two-step}, $c(z)$ is random over $N$.  Since $I^{-1}$ recovers $x$ and $e^*(z)$, while $e\in N[x]$, one has
\[
 N[c(z)]=N[x,e^*(z)]=N[x,z].
\]
For $\delta\in\Delta$, the shift $z\mapsto\delta\cdot z$ and its inverse belong to $N[x]$.  Hence
\[
 N[x,z]=N[x,\delta\cdot z],
\]
and therefore $N[c(z)]=N[c(\delta\cdot z)]$.
\end{proof}

\begin{remark}
Smythe's Bernoulli embedding starts with a countable group already in $M$~\cite[Theorem~4.3]{Smythe}.  Proposition~\ref{prop:second-coordinate} allows the group to belong to the first random extension $N[x]$.  Its Bernoulli shift is then realized using a second random real, while the resulting extension is still generated over $N$ by one random real.
\end{remark}

\section{Homomorphisms into essentially free relations}

We first record the elementary ergodicity of random generic equivalence.

\begin{lemma}\label{lem:E-ergodic}
The relation $\EBM$ is ergodic with respect to $\mu$.
\end{lemma}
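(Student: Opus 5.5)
To prove Lemma~\ref{lem:E-ergodic}, I will show that every $\EBM$-invariant Borel set $B\subseteq\Rand(M)$ is null or conull.  The idea is that invariance makes $B$ "definable from $M[x]$": membership of $x$ in $B$ depends only on the model $M[x]$, not on the specific generator.  I then use the homogeneity of the random algebra inside $M$ to move a positive-measure condition onto any other positive-measure condition, preserving the generated model.

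First step: reduce to a statement inside $M$.  Suppose $\mu(B)>0$ and $\mu(2^\omega\setminus B)>0$.  Since $B$ is Borel in the ambient universe and need not be coded in $M$, I cannot directly force with $B$.  Instead I will use that $M$ is countable and use measure-preserving maps coded in $M$.  By Lebesgue density, there are basic clopen sets $[s]$ and $[t]$, both coded in $M$, such that $B$ has relative measure greater than $1/2$ in $[s]$, and its complement has relative measure greater than $1/2$ in $[t]$.  Let $h\colon[s]\to[t]$ be the homeomorphism that replaces the prefix $s$ by $t$.  It is coded in $M$, and it scales measure by the constant $2^{|s|-|t|}$.

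Second step: $h$ preserves $\EBM$-classes and randomness.  For $x\in[s]\cap\Rand(M)$, the real $h(x)$ is random over $M$, because $h$ and $h^{-1}$ are coded in $M$ and map null sets to null sets.  Moreover $M[h(x)]=M[x]$, since each real is computable from the other using finite parameters.  By invariance of $B$, $x\in B$ if and only if $h(x)\in B$.  Hence $h$ maps $B\cap[s]$ onto $B\cap[t]$ up to null sets, and relative measures are preserved.  So $B$ has relative measure greater than $1/2$ in $[t]$, contradicting the choice of $[t]$.

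The only delicate point is the density step: I need basic open sets on which $B$, respectively its complement, has relative measure above $1/2$.  This follows from the Lebesgue density theorem on $2^\omega$, and it does not require $B$ to be coded in $M$.  Every clopen set and every prefix-swap map is coded in $M$, so the forcing-theoretic input reduces to the fact that prefix swaps preserve randomness and the generated model.  This makes the argument essentially the standard proof that the tail-type relation "same up to finite modification" is ergodic: $\EBM$ contains the orbit relation of the countable group of prefix swaps, which is $E_0$ restricted to $\Rand(M)$, and $E_0$ is $\mu$-ergodic.  Alternatively, I would simply note that $E_0\restr\Rand(M)\subseteq\EBM$ and cite ergodicity of $E_0$, which is the cleanest route.
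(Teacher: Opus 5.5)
Your argument is correct and is essentially the paper's proof: the paper notes that $E_0\restr\Rand(M)\subseteq\EBM$ and that $\Rand(M)$ is closed under finite changes, so every $\EBM$-invariant Borel set is $E_0$-invariant and the Kolmogorov zero--one law finishes the proof; this is your closing ``alternative,'' and your Lebesgue-density argument with prefix swaps simply reproves that zero--one law directly. One small correction: when $|s|\neq|t|$ the prefix swaps generate tail equivalence rather than $E_0$, which is harmless because $M[h(x)]=M[x]$ still holds, and you can take $|s|=|t|$ anyway by choosing both density neighborhoods at a common level.
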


\begin{proof}
The finite-change relation $E_0$ on $2^\omega$, restricted to $\Rand(M)$, is a subequivalence relation of $\EBM$: if $x$ and $y$ differ in only finitely many coordinates, then $M[x]=M[y]$.  Moreover, $\Rand(M)$ is invariant under finite changes.  Thus every $\EBM$-invariant Borel subset of $\Rand(M)$, viewed as a subset of $2^\omega$, is $E_0$-invariant.  The Kolmogorov zero--one law gives measure zero or one.
\end{proof}

We begin with the conull case.

\begin{proposition}\label{prop:conull-ergodicity}
Let $C\subseteq\Rand(M)$ be conull and Borel.  If $F$ is an essentially free countable Borel equivalence relation, then $(\EBM\restr C,\mu)$ is $F$-ergodic.
\end{proposition}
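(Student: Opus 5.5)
Fix a Borel homomorphism $f\colon\EBM\restr C\to F$. Since $F$ is essentially free, compose with a Borel reduction of $F$ to the orbit relation $E^Y_\Gamma$ of a free Borel action of a countable group $\Gamma$ on a standard Borel space $Y$. It then suffices to show that $f$, now viewed as a homomorphism into $E^Y_\Gamma$, sends a conull subset of $C$ into a single $\Gamma$-orbit. Define $g\colon2^\omega\times2^\omega\to Y$ by $g(x,u)=f(I(x,u))$ when $I(x,u)\in C$, and by a fixed constant otherwise. This map is Borel. The plan is to verify the two hypotheses of Lemma~\ref{lem:symmetrization} for $g$ with $\nu=\mu$. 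Since $I$ is measure preserving, the conclusion of that lemma transfers to a conull subset of $C$ lying in a single orbit.

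Hypothesis (2) is the easy one. For $\mu^2$-almost every $(x,u)$, the pair is product-random over $M$. By Fubini, $(u,x)$ is then also product-random, and both $I(x,u)$ and $I(u,x)$ lie in $C$, since $C$ is conull and $I$ is measure preserving. Lemma~\ref{lem:two-step}, or more precisely its proof, gives $M[I(x,u)]=M[x,u]=M[I(u,x)]$. Hence $I(x,u)\mathrel{\EBM}I(u,x)$, and $g(x,u)\mathrel{E^Y_\Gamma}g(u,x)$.

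Hypothesis (1) is the core. Fix $x$ random over $M$ such that $x\notin A_\Gamma$ (Lemma~\ref{lem:target-avoidance}) and $\mu(C_x)=1$. By Fubini these conditions hold for almost every $x$. Then $L_x\in M[x]$, and $\Delta=\Delta_{L_x}$ is countably infinite in $M[x]$. Apply Proposition~\ref{prop:second-coordinate} with $N=M$ to get $R$ and $c$. The composite $f\circ c$ is a Borel homomorphism from $E(\Delta,2)\restr R$ to $E^Y_\Gamma$; extend it arbitrarily off $R$. Suppose it were $\mu_\Delta$-nontrivial. Theorem~\ref{thm:Thomas} would then give $\pi\colon\Delta\to\Gamma$ with finite kernel. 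By Lemma~\ref{lem:no-finite-normal} the kernel is trivial, so $\Delta$, and hence $L_x\le S_{L_x}\le\Delta$, embeds into $\Gamma$. This contradicts $x\notin A_\Gamma$. Therefore $f\circ c$ maps a $\mu_\Delta$-conull set into one orbit. Since $e^*$ is measure preserving and $c(z)=I(x,e^*(z))$, the map $u\mapsto g(x,u)$ sends a $\mu$-conull set into one orbit, which gives (1).

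The main obstacle is the measurability bookkeeping in (1): checking that the conull set on which $g(x,\cdot)$ is orbit-constant is the image under $e^*$ of a conull subset of $R$, and that the set of good $x$ is measurable, or at least contains a conull Borel set. The latter holds because random-over-$M$, $\mu(C_x)=1$, and cocountability are all Borel or conull conditions. Note also that $\Gamma$ and $Y$ are external to $M$, which is harmless: Theorem~\ref{thm:Thomas} is applied in the ambient universe, and only $\Delta$ needs to lie in $M[x]$. Once (1) and (2) are in place, Lemma~\ref{lem:symmetrization} finishes the proof.
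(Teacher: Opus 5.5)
Your proof is correct and follows the paper's argument essentially step for step. You reduce to a free orbit relation $E^Y_\Gamma$, set $g(x,u)=f(I(x,u))$, and get hypothesis (1) of Lemma~\ref{lem:symmetrization} from Proposition~\ref{prop:second-coordinate}, Theorem~\ref{thm:Thomas} and Lemmas~\ref{lem:target-avoidance} and~\ref{lem:no-finite-normal} applied to $\Delta_{L_x}$; hypothesis (2) comes from $M[I(x,u)]=M[x,u]=M[I(u,x)]$. The one thing to tighten is the extension of $f\circ c$ off $R$: it should be by a constant, as in the paper, since that is a homomorphism on all of $2^{\Delta}$ precisely because $R$ is invariant, whereas an arbitrary Borel extension need not be.
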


\begin{proof}
It is enough to treat the case in which $F=E^Y_\Gamma$ is the orbit relation of a free Borel action of a countable group $\Gamma$.  Indeed, if $F$ is essentially free, compose the given homomorphism with a Borel reduction of $F$ to such a free orbit relation.  Triviality of the composite implies triviality of the original homomorphism.

Let
\[
 f\colon \EBM\restr C\longrightarrow E^Y_\Gamma
\]
be a Borel homomorphism.  Fix $y_0\in Y$ and extend $f$ to a Borel map $\widetilde f\colon2^\omega\to Y$ by setting $\widetilde f(w)=y_0$ off $C$.  Define
\[
 g(x,u)=\widetilde f(I(x,u)).
\]
We verify the two hypotheses of Lemma~\ref{lem:symmetrization}.

By Fubini's theorem and Lemma~\ref{lem:target-avoidance}, the set
\[
 X_0=
 \{x\in\Rand(M):\mu(C_x)=1\text{ and }x\notin A_\Gamma\}
\]
is conull.  Fix $x\in X_0$ and form
\[
 \Delta_x=\Delta_{L_x}
 =\SL_3(\Z)\times(L_x*\Z).
\]
In $M[x]$, the group $\Delta_x$ is countably infinite (indeed finitely generated).  Since $L_x$ embeds into $\Delta_x$, the group $\Delta_x$ does not embed into $\Gamma$.  By Lemma~\ref{lem:no-finite-normal}, $\Delta_x$ has no nontrivial finite normal subgroup.

Apply Proposition~\ref{prop:second-coordinate} to $N=M$, the real $x$, the group $\Delta_x$, and the set $C$.  Let $R_x\subseteq2^{\Delta_x}$ and $c_x\colon R_x\to C$ be as supplied there.  Then
\[
 \psi_x=f\circ c_x\colon R_x\longrightarrow Y
\]
is a Borel homomorphism from $E(\Delta_x,2)\restr R_x$ to $E^Y_\Gamma$.  Since $R_x$ is invariant, extend $\psi_x$ to all of $2^{\Delta_x}$ by assigning the value $y_0$ off $R_x$.

This extension must be $\mu_{\Delta_x}$-trivial.  Otherwise Theorem~\ref{thm:Thomas} would yield a homomorphism
\[
 \pi\colon\Delta_x\longrightarrow\Gamma
\]
with finite kernel.  Its kernel is a finite normal subgroup of $\Delta_x$, hence trivial by Lemma~\ref{lem:no-finite-normal}.  Thus $\pi$ would embed $\Delta_x$ into $\Gamma$, contradicting $x\notin A_\Gamma$.

Let $e_x\colon\omega\to\Delta_x$ be the bijection used in Proposition~\ref{prop:second-coordinate}.  The triviality just proved yields a conull Borel set $Q_x\subseteq R_x$ whose image under $\psi_x$ lies in one $E^Y_\Gamma$-class.  Since $e_x^*$ is measure preserving, $U_x=e_x^*[Q_x]$ is conull in $2^\omega$, and
\[
 g(x,u)\mathrel{E^Y_\Gamma}g(x,v)
\]
whenever $u,v\in U_x$.  This holds for every $x\in X_0$, so the first hypothesis of Lemma~\ref{lem:symmetrization} is satisfied.

For the second hypothesis, consider a product-random pair $(x,u)$ over $M$ such that both $I(x,u)$ and $I(u,x)$ belong to $C$.  Such pairs form a conull subset of $2^\omega\times2^\omega$.  Both interleavings are random over $M$, and
\[
 M[I(x,u)]=M[x,u]=M[I(u,x)].
\]
Since $f$ is a homomorphism,
\[
 g(x,u)=f(I(x,u))
 \mathrel{E^Y_\Gamma}
 f(I(u,x))=g(u,x).
\]
Thus the second hypothesis of Lemma~\ref{lem:symmetrization} also holds.

The lemma gives a single $E^Y_\Gamma$-class containing $g(x,u)$ for almost every pair $(x,u)$.  Since $I$ is measure preserving and $C$ is conull, a conull subset of $C$ is mapped by $f$ into that class.  Hence $f$ is $\mu$-trivial.
\end{proof}

\begin{proof}[Proof of Theorem~\ref{thm:main}]
Let $A\subseteq\Rand(M)$ be Borel with $\mu(A)>0$, and let
\[
 f\colon\EBM\restr A\longrightarrow F
\]
be a Borel homomorphism, where $F$ is essentially free.

Let $[A]_{\EBM}$ be the $\EBM$-saturation of $A$.  By the Lusin--Novikov uniformization theorem~\cite[Theorem~18.10]{Kechris}, the saturation is Borel and there is a Borel map
\[
 r\colon[A]_{\EBM}\longrightarrow A
\]
such that $r(x)\,\EBM\,x$ for every $x\in[A]_{\EBM}$.  Since $[A]_{\EBM}$ is $\EBM$-invariant and contains the positive-measure set $A$, Lemma~\ref{lem:E-ergodic} shows that it is conull.  The map $r$ is a homomorphism from $\EBM\restr[A]_{\EBM}$ to $\EBM\restr A$.

By Proposition~\ref{prop:conull-ergodicity}, the composite $f\circ r$ maps a conull subset $Z$ of $[A]_{\EBM}$ into a single $F$-class.  If $a\in A\cap Z$, then $a\,\EBM\,r(a)$ and both points belong to $A$, so
\[
 f(a)\mathrel{F}f(r(a)).
\]
It follows that $f$ maps the $\mu_A$-conull set $A\cap Z$ into that same $F$-class.  Thus $(\EBM\restr A,\mu_A)$ is $F$-ergodic.
\end{proof}

\begin{proof}[Proof of Corollary~\ref{cor:positive}]
Suppose that
\[
 f\colon\EBM\restr A\longrightarrow F
\]
is a weak Borel reduction, where $\mu(A)>0$ and $F$ is essentially free.  By Theorem~\ref{thm:main}, a $\mu_A$-conull subset of $A$ is mapped into one $F$-class.  That class is countable, while a countable-to-one map has countable preimage of a countable set.  This contradicts the nonatomicity of $\mu_A$.

If $\EBM\restr A$ were essentially free, then it would be Borel-reducible, and hence weakly Borel-reducible, to the orbit relation of a free Borel action.  Thus no positive-measure restriction is essentially free.
\end{proof}

\begin{remark}
Theorem~\ref{thm:main} says, for example, that every Borel homomorphism from any positive-measure restriction of $\EBM$ to a hyperfinite or treeable countable Borel equivalence relation is measure-trivial, since such relations are essentially free~\cite{KechrisMiller}.  Thus the conclusion is stronger than the corresponding non-amenability and non-treeability statements.
\end{remark}

We finish with an immediate consequence for iterations ending in random forcing.  Use the standard Borel coding of a $\mathbb P*\dot\B$-generic filter by a pair $(G,w)$, where $G$ is $\mathbb P$-generic over $M$ and $w$ is random over $M[G]$.  Under this coding, every fixed-$G$ fiber is a Borel subspace.

\begin{corollary}\label{cor:random-tail}
Let $\mathbb P\in M$, let $\dot\B$ be the canonical $\mathbb P$-name for random forcing in the $\mathbb P$-extension, and fix a $\mathbb P$-generic filter $G$ over $M$.  Put $N=M[G]$.  For every Borel set $A\subseteq\Rand(N)$ with $\mu(A)>0$ and every essentially free countable Borel equivalence relation $F$, the restriction of $\eqgen{M}{\mathbb P*\dot\B}$ to
\[
 \{(G,w):w\in A\}
\]
is $F$-ergodic with respect to normalized random measure $\mu_A$ on the fixed-$G$ fiber.  Consequently, for every essentially free countable Borel equivalence relation $F$,
\[
 \eqgen{M}{\mathbb P*\dot\B}\not\leBw F.
\]
\end{corollary}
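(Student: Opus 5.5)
The plan is to transfer the fiber problem to $E_N^{\B}$ and rerun the proof of Theorem~\ref{thm:main} with $N=M[G]$ in place of $M$. First, on the fixed-$G$ fiber, for $w,w'\in\Rand(N)$ one has
\[
 (G,w)\eqgen{M}{\mathbb P*\dot\B}(G,w')
 \iff M[G][w]=M[G][w']
 \iff w\mathrel{E_N^{\B}}w'.
\]
So the restriction of $\eqgen{M}{\mathbb P*\dot\B}$ to $\{(G,w):w\in A\}$ is isomorphic, via the Borel map $w\mapsto(G,w)$, to $E_N^{\B}\restr A$. A Borel homomorphism from the fiber restriction to $F$ therefore becomes, after composing with this map, a Borel homomorphism from $E_N^{\B}\restr A$ to $F$.

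Second, I check that nothing in Sections~2--5 used more about $M$ than that it is a countable transitive model of the fixed fragment. The argument used Lemma~\ref{lem:absolute-perfect-family}, Lemma~\ref{lem:target-avoidance}, Proposition~\ref{prop:second-coordinate} (already stated for arbitrary $N$), Lemma~\ref{lem:E-ergodic}, Proposition~\ref{prop:conull-ergodicity} and the Lusin--Novikov step. The model $N=M[G]$ is again countable and transitive. It satisfies the needed fragment provided the fragment for $M$ was chosen large enough that it is preserved by forcing with $\mathbb P$; I would make this explicit by enlarging ``sufficiently large'' in the standing hypothesis. The perfect-family construction only needs that $N$ sees $\mathcal N_2$ as uncountable, which holds in $N$ since it holds in $M\subseteq N$. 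One may even reuse the same $M$-coded $\vartheta$. Hence Theorem~\ref{thm:main} holds verbatim with $N$ in place of $M$, which gives the $F$-ergodicity of $(E_N^{\B}\restr A,\mu_A)$ and thus the first claim.

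Third, for non-reducibility, suppose $f$ were a countable-to-one Borel homomorphism from $\eqgen{M}{\mathbb P*\dot\B}$ to an essentially free $F$. Restrict $f$ to the fiber over $G$ with $A=\Rand(N)$; this fiber is a Borel subspace under the coding. The restriction is still a countable-to-one Borel homomorphism. By the first part it sends a $\mu$-conull set of $w$ into one countable $F$-class. That is impossible, exactly as in the proof of Corollary~\ref{cor:positive}, because $\mu$ is nonatomic.

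The main obstacle is the bookkeeping in the second step. I must confirm that the fragment of ZFC is preserved into $M[G]$ so that Theorem~\ref{thm:main} applies to $N$, and that the coding of generics makes the fiber map Borel. I would handle the first by stating it as part of the standing choice of the fragment, rather than reproving anything.
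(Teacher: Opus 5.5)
Your proposal is correct and follows the paper's proof: you identify the fixed-$G$ fiber relation with $E_N^{\B}$, apply Theorem~\ref{thm:main} over $N=M[G]$, and obtain non-reducibility from nonatomicity exactly as in Corollary~\ref{cor:positive}. The paper leaves implicit, under ``sufficiently large'', your point that the fragment must survive into $M[G]$. One small correction: uncountability is not in general upward absolute from $M$ to $N$, so ``it holds in $M\subseteq N$'' is not the right reason. The right reason is the one you also give: reuse the $M$-coded $\vartheta$, whose perfect tree stays perfect in $N$.
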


\begin{proof}
On the fixed-$G$ fiber, for $w,w'\in\Rand(N)$ one has
\[
 (G,w)\eqgen{M}{\mathbb P*\dot\B}(G,w')
 \quad\Longleftrightarrow\quad
 M[G,w]=M[G,w']
 \quad\Longleftrightarrow\quad
 N[w]=N[w'].
\]
Thus the restriction of $\eqgen{M}{\mathbb P*\dot\B}$ to this fiber is precisely $E_N^{\B}$.  The first assertion is Theorem~\ref{thm:main}, applied over the countable transitive model $N$.

For the consequence, fix an essentially free countable Borel equivalence relation $F$ and suppose that a weak Borel reduction from $\eqgen{M}{\mathbb P*\dot\B}$ to $F$ existed.  Restrict it to the fixed-$G$ fiber over $\Rand(N)$.  Applying the first assertion with $A=\Rand(N)$ makes this restriction measure-trivial.  But an $F$-class is countable, and the preimage of a countable set under a countable-to-one map is countable, contradicting the nonatomicity of random measure.
\end{proof}

\begin{remark}
The same conclusion holds whenever, below some condition in $M$, a forcing is equivalent to an iteration ending in random forcing.
\end{remark}

\end{document}